\documentclass[12pt,A4paper]{article}

\usepackage[left=25mm,right=25mm,top=25mm,bottom=30mm]{geometry}
\usepackage{epsfig}
\usepackage{epstopdf}
\usepackage{soul}
\usepackage{xfrac}
\usepackage{amsmath}
\usepackage{enumerate}

\usepackage[percent]{overpic}

\usepackage{color}
\usepackage{amsthm,amsmath,amssymb}
\usepackage{booktabs}
\usepackage{mathpazo}
\usepackage{microtype}
\usepackage{overpic}
\usepackage{bm}
\usepackage{sectsty}
\usepackage[
	pdftitle={PDFTitle},
	pdfauthor={Hugo Parlier},
	ocgcolorlinks,
	linkcolor=linkred,
	citecolor=linkred,
	urlcolor=linkblue]
{hyperref}

\usepackage{tikz}
\usetikzlibrary{calc,decorations.pathreplacing}

\definecolor{linkred}{RGB}{157,91,246} 
\definecolor{linkblue}{RGB}{16, 78, 139}

\usepackage[hang,flushmargin]{footmisc}

\usepackage{titlesec}
	\titlespacing{\section}{0pt}{12pt}{0pt}
	\titlespacing{\subsection}{0pt}{6pt}{0pt}
	
\titlelabel{\thetitle.\quad}

\makeatother 

\theoremstyle{plain}
\newtheorem{theorem}{Theorem}[section]

\newtheorem{lemma}[theorem]{Lemma}

\theoremstyle{definition}

\newtheorem{remark}[theorem]{Remark}

\newcommand{\R}{{\mathbb R}}

\newcommand{\G}{{\mathcal G}}

\newcommand{\pitop}{N_X^{\mathrm{top}}}
\newcommand{\pitopplus}{N_X^{\mathrm{top},+}}

\newcommand{\Curr}{\mathcal C}
\newcommand{\ML}{\mathcal{ML}}
\newcommand{\Mod}{\operatorname{Mod}}
\newcommand{\Isom}{\operatorname{Isom}}
\newcommand{\Th}{\mu_{\mathrm{Thu}}}

\sectionfont{\large \bfseries}
\subsectionfont{\normalsize}

\long\def\symbolfootnote[#1]#2{\begingroup%
\def\thefootnote{\fnsymbol{footnote}}\footnote[#1]{#2}\endgroup}

\def\blfootnote{\xdef\@thefnmark{}\@footnotetext}

\usepackage{mathtools}

\usepackage[sort,nocompress]{cite}

\begin{document}

{\Large \bfseries A topological version of Huber's theorem}

{\large Ara Basmajian\symbolfootnote[1]{\small 
Supported by a Dolciani faculty research grant, a PSC-CUNY research grant, and a grant from the Simons Foundation (TSM 00013865 A.B.).},
Hugo Parlier\symbolfootnote[7]{\small Supported by ANR-SNF Grant number 200021E\_238147
(SUGAR).\\
{\em 2020 Mathematics Subject Classification:}\\Primary: 32G15, 57K20 Secondary: 30F45, 53C22.\\
{\em Key words and phrases:}\\
closed geodesics, hyperbolic surfaces, geodesic currents, topological types}
}

\vspace{0.5cm}
{\bf Abstract.}
Let $X$ be a closed hyperbolic surface. We prove that the number of topological types of primitive closed geodesics of length at most $L$ is asymptotic to 
\[
\frac{1}{|\Isom(X)|}\frac{e^L}{2L}.
\]
as $L$ grows. Thus Huber's asymptotic remains unchanged after quotienting by topological type, up to the finite symmetry factor coming from the isometry group of $X$.

\vspace{0.5cm}

\section{Introduction} \label{sec:intro}

By a result of Huber \cite{Huber}, the number of non-oriented primitive closed geodesics of a closed hyperbolic surface grows asymptotically like $e^L/(2L)$. In contrast, Mirzakhani \cite{Mirzakhani2008,MirzakhaniOrbits} studied the growth of geodesics of fixed topological type, showing that there is again asymptotic growth but this time of order $L^d$ where $d$ is the dimension of the underlying moduli space. The passage from polynomial to exponential is thus due to an increasing number of topological types. 

We make the following observation about their growth which, to the best of our knowledge, does not seem to be explicitly stated in the literature. 

\begin{theorem}\label{thm:types}
For any closed orientable hyperbolic surface $X$, as $L\to\infty$, the number of topological types of primitive closed geodesics of length less than $L$ grows asymptotically like
\[
 \frac{1}{|\Isom(X)|}\,\frac{e^L}{2 L}
 \]
where $|\Isom(X)|$ is the order of the isometry group of $X$. 
\end{theorem}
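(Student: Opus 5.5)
Two closed geodesics have the same topological type when some homeomorphism of $X$ carries one to the other, i.e.\ when their free homotopy classes lie in the same $\Mod(X)$-orbit (extended mapping class group). The number of types of length at most $L$ is therefore the number of $\Mod(X)$-orbits meeting the set $\G_L$ of primitive closed geodesics of length at most $L$. The plan is to show that, up to an error of order $o(e^L/L)$, a typical geodesic $\gamma\in\G_L$ meets its orbit within $\G_L$ only in its $\Isom(X)$-orbit, and that this isometry orbit has exactly $|\Isom(X)|$ elements. Huber's theorem, $|\G_L|\sim e^L/(2L)$, then gives the claim.

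\textbf{Lower bound.} Each orbit meets $\G_L$ in at least $|\Isom(X)\cdot\gamma|$ and at most something larger. For the upper bound on the count of types we use only that $|\Isom(X)\cdot \gamma|\le |\Isom(X)|$, which is not enough on its own; we must control both directions.

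\textbf{Step 1 (generic stabilizer).} Show that all but $o(e^L/L)$ geodesics in $\G_L$ have trivial stabilizer in $\Isom(X)$. A nontrivial isometry $\phi$ fixing $\gamma$ setwise descends to the quotient orbifold $X/\langle\phi\rangle$, where $\gamma$ becomes a closed geodesic of length at most $L/k$ for some $k\ge2$, or $\gamma$ passes through fixed points or reflection curves of $\phi$. By Huber/prime geodesic counting on the orbifold, the first case contributes $O(e^{L/2})$. In the second case, $\gamma$ is invariant under an orientation-reversing or rotational symmetry and is built from geodesic arcs of length $\approx L/2$ with endpoints constrained; a counting of geodesic arcs between fixed points also gives $O(e^{L/2+\epsilon})$. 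Hence generic orbits under $\Isom(X)$ have size exactly $|\Isom(X)|$.

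\textbf{Step 2 (main obstacle): rigidity of length within a mapping class orbit.} We need that for all but $o(e^L/L)$ geodesics $\gamma\in\G_L$, no mapping class $f\notin\Isom(X)$ satisfies $\ell(f\gamma)\le L$. Equivalently, the number of pairs $(\gamma,f\gamma)$ with both of length at most $L$ and $f$ not an isometry is $o(e^L/L)$. I would approach this with geodesic currents. By Bonahon, a random geodesic $\gamma\in\G_L$ (equidistributed by Bowen–Margulis/Huber) has normalized current $\gamma/\ell(\gamma)\to L_X$, the Liouville current, with $i(L_X,\cdot)$ recovering length. If $\ell(f\gamma)\le L\approx\ell(\gamma)$ for a non-isometric $f$, then $f_*(\gamma/L)$ has length at most $1$. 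Here $f$ ranges over mapping classes that cannot be too far away, since $i(f\gamma,f\gamma)=i(\gamma,\gamma)\sim L^2/\pi^2 \cdot$const forces $f_*L_X$ to have self-intersection equal to that of $L_X$, while $\ell_X(f_*L_X)=i(L_X,f_*L_X)\ge i(L_X,L_X)$, with equality iff $f_*L_X=L_X$, i.e.\ $f$ is an isometry. This follows from the strict inequality $i(L_X,L_Y)\ge \pi^2|\chi|$ with equality only for $X=Y$ (Thurston/Bonahon length rigidity). A compactness argument over projectivized currents then gives $\ell(f\gamma)\ge(1+\delta)\ell(\gamma)$ uniformly for $f$ outside $\Isom(X)$ and $\gamma$ in a neighborhood of $L_X$. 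The delicate point is making this uniform over infinitely many $f$. Properness of $\Mod(X)$ acting on Teichmüller space handles this: $f_*L_X$ escapes to infinity, and then $i(L_X,f_*L_X)\to\infty$. Finally, non-equidistributed geodesics, those outside the neighborhood, are $o(e^L/L)$ by large deviations for geodesic flow.

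\textbf{Conclusion.} Combining the steps, the orbits meeting $\G_L$ consist of generic orbits of size exactly $|\Isom(X)|$, accounting for $(1-o(1))e^L/(2L)$ geodesics, plus $o(e^L/L)$ exceptional geodesics, each lying in at most one orbit. Dividing gives the stated asymptotic.
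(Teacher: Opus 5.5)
Your argument is correct in outline, but its key step takes a different route from the paper. The paper never compares lengths within a type. It imports the Erlandsson--Mondello proper discontinuity of $\Mod(\Sigma)$ on filling currents, and uses $i(\mu,\mu)$ to control the rescaling factor so that properness passes to the slice $\Curr_X^1$. This gives a $G_X$-invariant neighborhood $U$ of $\overline L_X$ with $(\phi\cdot U)\cap U\neq\varnothing\Rightarrow\phi\in G_X$. Hence the types represented in $A_L$ are exactly the $G_X$-orbits in $A_L$, and all other types are bounded by the exceptional set.

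You instead prove a \emph{length gap}: $\ell_X(f_*\mu)\ge 1+\delta$ for all $\mu$ near $\overline L_X$ and all $f\notin G_X$. This rests on the strict inequality $i(L_X,f_*L_X)>i(L_X,L_X)$ for $f\notin G_X$. That is a genuine rigidity theorem ($i(L_X,L_Y)\ge\pi^2|\chi(\Sigma)|$, with equality only for $Y=X$), and the paper does not need it. You must also discard the $O(e^{L/(1+\delta)})$ geodesics of length at most $L/(1+\delta)$; this is what your ``$L\approx\ell(\gamma)$'' has to mean. With that done, you get a stronger conclusion than the paper: a generic $\gamma$ is, up to isometry, the only curve of its type in $\G_X(L)$, and it is shorter than every other curve of its type by the factor $1+\delta$. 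The paper's route is cheaper, since it uses only that the stabilizer of $\overline L_X$ is $G_X$, not any extremal property of the Liouville current. Yours buys extra geometric information at the cost of the rigidity input.

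One step in your sketch should be made explicit. Divergence of $i(L_X,f_*L_X)$ is a statement at the single point $\overline L_X$. Continuity of $\mu\mapsto\ell_X(f_*\mu)$ is not uniform in $f$, so this divergence does not by itself control a neighborhood.

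The fix is as follows. Take a compact neighborhood $K\subset\Curr_X^1$ of $\overline L_X$ consisting of filling currents, so that $c=\min\{i(\nu,\mu):\nu\in\Curr_X^1,\ \mu\in K\}>0$. Write $\ell_X(f_*\mu)=i(f^{-1}_*L_X,\mu)$ and normalize $f^{-1}_*L_X$ into $\Curr_X^1$; its length is $i(L_X,f_*L_X)$. This gives $\ell_X(f_*\mu)\ge c\,i(L_X,f_*L_X)$ for all $\mu\in K$. That disposes of all but finitely many $f$ uniformly, and the finitely many remaining $f\notin G_X$ are handled by continuity and the strict inequality. This is exactly where filling enters, and it amounts to a local reproof of the properness result the paper quotes.
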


This result wasn't what we expected at first. The set of geodesics of bounded length splits into subsets of curves of the same topological type, all of which grow like a polynomial of the same degree. A first guess might be that the growth of the number of types is roughly $e^L$ divided by a polynomial. As stated, this is exactly a result of Aougab and Souto \cite{AougabSouto} who were really in pursuit of another counting problem. Their result and the validity of the theorem above rely on the behavior of so-called frequencies which determine the dominating coefficient in the polynomial growth rates of curves of fixed topological type, but the problem here is orthogonal in some sense to recent asymptotic results about frequencies where the topology is allowed to vary \cite{DGZZ,LiuRafiSoutoTrin}. 

We circumvent the problem of frequencies entirely by using the theory of geodesic currents, first introduced by Bonahon \cite{Bonahon}, used with great effect for numerous counting problems by Erlandsson and Souto (see for example \cite{ErlandssonSoutoCounting,ErlandssonSoutoBook}). In fact, the proof consists in simply putting together several existing results, namely the properness of the action of the mapping class group on the space of filling currents \cite{ErlandssonMondello}, and results of Aougab and Souto who meshed Huber's result and a theorem of Lalley \cite{Lalley} in the language of currents. 

Finally, we note that there are many other counting results about curve types. The main goal of \cite{AougabSouto} was to study the number of curve types with self-intersection at most $k$ and then let $k$ grow. On the other hand, Cahn, Fanoni and Petri \cite{CahnFanoniPetri} fixed $k$ and let the genus grow. 

\noindent {\bf Acknowledgement.}

We thank Juan Souto for a helpful conversation and encouragement. 

\section{Counting topological types}

We begin by fixing the notation and recalling the specific objects we need. We use both $\#$ and $| \cdot |$ for cardinality of a set. 

Let $\Sigma$ be a closed orientable surface of negative Euler characteristic, and $X$ a closed hyperbolic surface homeomorphic to $\Sigma$. Let $\ell_X$ denote the length function associated to $X$ which associates to the free homotopy class of closed curve the length of its corresponding closed geodesic. We denote by $\G_X(L)$ the set of primitive unoriented\footnote{Considering unoriented geodesics accounts for the ``dividing by $2$'' factor in Huber's theorem.} closed geodesics of length less than $L$ on $X$. We set $\Mod(\Sigma)$ to be the full mapping class group of $\Sigma$, and we say that two closed curves $\gamma$ and $\delta$ are of the same topological type if there exists $\phi \in \Mod(\Sigma)$ such that $\phi(\gamma)$ and $\delta$ are freely homotopic. We now denote by $\pitop(L)$ the number of topological types of curves found in $\G_X(L)$. If we fix a marking of $X$ by $\Sigma$, we can identify its full isometry group $\Isom(X)$ with its image in $\Mod(\Sigma)$.

Note that we could have restricted ourselves to defining topological type as with the orientation preserving mapping class group $\Mod^{+}(\Sigma)$, defined $\pitopplus(L)$ in the same way, and the corresponding result is explained in Remark \ref{rem:orient}. 

The proof uses several useful facts about geodesic currents, first introduced by Bonahon \cite{Bonahon}. Let $\Curr(\Sigma)$ denote the associated space of geodesic currents on $\Sigma$ and its intersection form
\[
i : \Curr(\Sigma) \times \Curr(\Sigma) \to \R_{\geq 0}
\]
which is continuous, symmetric and bilinear. A current is called filling if its support intersects every other geodesic, and we denote by $\Curr^{f}(\Sigma)$ the set of filling currents. Now for a closed hyperbolic surface $X\cong \Sigma$, let $L_X \in \Curr(\Sigma)$ be its Liouville current. The function $\ell_X$ defined above on free homotopy classes extends to a continuous function
\[
\ell_X:\Curr(\Sigma) \to \R,
\]
homogeneous under the action of $\R_+$ and such that for every $\mu \in \Curr(\Sigma)$ we have
\[
 i(L_X,\mu)=\ell_X(\mu).
\]
We consider the compact set
\[
 \Curr_X^1:=\{\mu\in\Curr(\Sigma):\ell_X(\mu)=1\}
\]
and set 
\[
 \overline{L}_X:=\frac{L_X}{\ell_X(L_X)}=\frac{L_X}{\pi^2 |\chi(X)|}
\]
the normalized Liouville current associated to $X$. The slice $\Curr_X^1$ is naturally identified with projective current space. We equip it with the induced action of $\Mod(\Sigma)$: for $\phi\in\Mod(\Sigma)$ and $\mu\in\Curr_X^1$, set
\[
 \phi\cdot\mu
 =
 \frac{\phi_*\mu}{\ell_X(\phi_*\mu)}.
\]
Thus this action is simply the usual action on projective currents, written in the length-one slice. Notice that for $h\in \Isom(X)$ this agrees with the usual linear action, since $h$ preserves $\ell_X$.

\subsection{Three ingredients}
The first ingredient is due to Aougab and Souto \cite[Lemma~2.1]{AougabSouto}.

\begin{lemma}\label{lem:growth}
For any open neighborhood $U$ of $\overline L_X$ in $\Curr_X^1$, 
\begin{equation*}
 \#\left\{\gamma\in\mathcal G_X(L):
 \frac{\gamma}{\ell_X(\gamma)}\in U\right\}
 \sim \frac{e^L}{2L}.
\end{equation*}
\end{lemma}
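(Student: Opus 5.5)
The upper bound is immediate from Huber's theorem: the set in question is contained in $\G_X(L)$, whose cardinality is $\sim e^L/(2L)$ by \cite{Huber}. So everything reduces to the lower bound. Equivalently, we must show that the complementary set, of geodesics in $\G_X(L)$ whose normalized current $\gamma/\ell_X(\gamma)$ lies outside $U$, has cardinality $o(e^L/L)$. This is an equidistribution statement: typical closed geodesics, counted by length, equidistribute towards the normalized Liouville current.

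The key input is Lalley's theorem \cite{Lalley}, or equivalently Bowen's equidistribution of closed orbits of the geodesic flow. It says that for any continuous function $f$ on the unit tangent bundle $T^1X$, and any $\epsilon>0$, the proportion of closed geodesics of length at most $L$ whose orbit average of $f$ differs from the Liouville integral of $f$ by more than $\epsilon$ tends to zero. In fact Lalley gives exponentially small proportions via large deviations, but $o(1)$ suffices.

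I would then translate this into currents. The topology on $\Curr(\Sigma)$ is the weak-$*$ topology on $\pi_1$-invariant measures on the space of geodesics of $\widetilde X$. A basic neighborhood of $\overline L_X$ in $\Curr_X^1$ is cut out by finitely many compactly supported continuous test functions $g_1,\dots,g_k$ on the space of geodesics $\G(\widetilde X)$, with conditions $|\langle g_j,\mu\rangle - \langle g_j,\overline L_X\rangle|<\epsilon$. Each such $g_j$ corresponds to a continuous function $f_j$ on $T^1X$. To build it, lift the current to a flow-invariant measure on $T^1X$: the current $\gamma$ corresponds to the orbit measure of total mass $\ell_X(\gamma)$, and $L_X$ corresponds to the Liouville measure of mass $\pi^2|\chi|$ (up to normalization). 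Then take $f_j$ to be the average of $g_j$ along a unit-length flow segment, projected to $X$ through a fundamental-domain partition of unity. With this choice, $\langle g_j,\gamma\rangle/\ell_X(\gamma)$ equals the orbit average of $f_j$ along $\gamma$, and $\langle g_j,\overline L_X\rangle$ equals the normalized Liouville integral of $f_j$. The normalization $\overline L_X = L_X/(\pi^2|\chi(X)|)$ is exactly what makes the masses match. Applying equidistribution to $f_1,\dots,f_k$ simultaneously, and using a union bound over these finitely many functions, gives that all but $o(e^L/L)$ of the geodesics in $\G_X(L)$ have $\gamma/\ell_X(\gamma)\in U$.

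The main obstacle is this dictionary step: making precise that the weak-$*$ topology on currents, restricted to the slice $\Curr_X^1$, coincides with convergence of the associated normalized flow-invariant probability measures on $T^1X$. I would use the standard Bonahon identification, which gives a homeomorphism between currents and flow-invariant measures, together with the fact that $\ell_X$ is the total-mass map. A smaller point is primitivity. Lalley's statement is for prime periodic orbits, which are oriented, so passing to unoriented geodesics introduces a factor of two. This factor is harmless, since $\gamma$ and its reverse define the same current.
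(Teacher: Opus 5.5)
Your proposal is correct and follows the same route as the paper: the paper does not reprove this lemma but cites Aougab--Souto, who obtain it as you do, using Huber's asymptotic for the total count and Lalley's theorem that typical closed geodesics equidistribute, rephrased in the language of currents, to show that all but $o(e^L/L)$ geodesics have normalized current in $U$. One small correction: Bowen's equidistribution theorem only concerns the average of the orbit measures, so it is not literally equivalent to the typical-orbit statement you need. Either rely on Lalley directly, or add the observation that the Liouville measure is ergodic, hence extremal, so the only probability measure on invariant measures with barycenter Liouville is its Dirac mass; this upgrades Bowen's averaged convergence to convergence in proportion.
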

Thus, after normalization by length, a density-one set of closed geodesics is
arbitrarily close to the Liouville current. They obtain this by combining a theorem of Lalley's \cite{Lalley} in the language of currents with Huber's asymptotic formula \cite{Huber}. 

The second ingredient is the following, due to Erlandsson and Mondello \cite[Proposition~4.1]{ErlandssonMondello}:
\begin{lemma}\label{lem:proper}
The mapping class group acts properly discontinuously on the space of filling currents $\Curr^f(\Sigma)\subset\Curr(\Sigma)$.
\end{lemma}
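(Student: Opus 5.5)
The plan is to show directly that for every compact set $K\subset\Curr^f(\Sigma)$ only finitely many $\phi\in\Mod(\Sigma)$ satisfy $\phi_*K\cap K\neq\emptyset$. Since $\Curr^f(\Sigma)$ is locally compact and $\Mod(\Sigma)$-invariant, this is exactly proper discontinuity. The mechanism is to compare $i(\nu,\cdot)$ for filling currents $\nu$ with the length function $\ell_X=i(L_X,\cdot)$.

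\textbf{Step 1 (uniform comparison).} First I would show that for every filling $\nu$ and every nonzero current $\lambda$ we have $i(\nu,\lambda)>0$. The support of $\lambda$ contains some geodesic. Since $\nu$ is filling, its support meets that geodesic transversally, and the intersection measure is positive on a neighbourhood of such a transverse pair.

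The function $(\nu,\lambda)\mapsto i(\nu,\lambda)$ is continuous and positive on the compact set $K\times\Curr_X^1$. Hence there are constants $0<c\le C$ with
\[
 c\,\ell_X(\lambda)\le i(\nu,\lambda)\le C\,\ell_X(\lambda)
 \qquad\text{for all }\nu\in K,\ \lambda\in\Curr(\Sigma),
\]
where the general $\lambda$ is handled by homogeneity.

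\textbf{Step 2 (bounding images of a filling system).} Fix finitely many closed curves $\alpha_1,\dots,\alpha_n$ that together fill $\Sigma$. Suppose $\nu,\nu'\in K$ with $\phi_*\nu'=\nu$. Using the $\Mod(\Sigma)$-invariance of $i$, we get
\[
 c\,\ell_X(\phi^{-1}\alpha_j)\le i(\nu',\phi^{-1}\alpha_j)=i(\phi_*\nu',\alpha_j)=i(\nu,\alpha_j)\le C\,\ell_X(\alpha_j).
\]
So every $\phi^{-1}\alpha_j$ has $X$-length at most $\frac{C}{c}\max_j\ell_X(\alpha_j)$. There are only finitely many free homotopy classes of closed curves below a given length. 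Therefore the tuple $(\phi^{-1}\alpha_1,\dots,\phi^{-1}\alpha_n)$ takes only finitely many values as $\phi$ ranges over the relevant mapping classes.

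\textbf{Step 3 (finite stabilizer).} It remains to see that the subgroup of $\Mod(\Sigma)$ fixing each homotopy class $\alpha_j$ is finite. This gives finitely many $\phi$ per tuple value, and hence finitely many $\phi$ in total.

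To prove this, put the $\alpha_j$ in minimal position, for instance as geodesics on $X$. Their union is a graph whose complementary regions are disks, because the $\alpha_j$ fill. A mapping class fixing all the $\alpha_j$ can be isotoped to preserve this graph. Such a map is then determined, up to isotopy, by the induced combinatorial automorphism of the graph, since it is determined on each disk by its boundary by the Alexander trick. There are only finitely many such automorphisms.

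I expect Step 3 to be the main technical point. The care is needed in passing from ``fixes each homotopy class'' to ``preserves the geodesic graph up to isotopy'' when the curves are non-simple and intersect in a complicated way. My first approach would be to use geodesic representatives for an auxiliary metric: the induced action on $\pi_1$, up to conjugacy, then determines the action on the cell structure. Alternatively, I would replace the $\alpha_j$ by a filling system of \emph{simple} closed curves, for which this finiteness is standard. Step 2 works verbatim for any finite filling system.
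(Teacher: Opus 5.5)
Your proof is correct, and it takes a different route from the paper. The paper gives no argument of its own: it cites Erlandsson--Mondello (Proposition~4.1) for $\Mod^+(\Sigma)$, mentions Erlandsson--Souto and Burger--Iozzi--Parreau--Pozzetti as alternative sources, and passes to the full $\Mod(\Sigma)$ because $\Mod^+(\Sigma)$ has index two.

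You instead reduce the statement to two classical finiteness facts.
\begin{itemize}
\item Positivity of $i(\nu,\cdot)$ on nonzero currents for filling $\nu$, together with compactness of $K\times\Curr_X^1$, gives the uniform comparison $c\,\ell_X\le i(\nu,\cdot)\le C\,\ell_X$ for $\nu\in K$.
\item This comparison turns $\phi_*K\cap K\neq\varnothing$ into a bound on $\ell_X(\phi^{-1}\alpha_j)$.
\item Finiteness of curves of bounded length, plus finiteness of the stabilizer of a filling system, finishes the argument.
\end{itemize}
In effect you transplant the standard proof that $\Mod(\Sigma)$ acts properly on Teichm\"uller space. This buys a self-contained argument that shows exactly where filling enters (the constant $c>0$) and treats $\Mod(\Sigma)$ directly. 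The paper's citation is shorter but leaves all of this to the literature.

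In Step~3, commit to the simple-curve version from the start. Step~2 uses nothing about the $\alpha_j$, so you lose nothing.
\begin{itemize}
\item A chain of $2g$ simple closed curves (consecutive ones meeting once, the others disjoint) fills $\Sigma$ and contains no three pairwise intersecting curves.
\item So the Alexander method (as in Farb--Margalit's \emph{Primer}) applies. A mapping class fixing every $\alpha_j$ has a representative preserving the union $\Gamma$, and it is determined by the induced automorphism of $\Gamma$.
\item This gives finiteness of the stabilizer in $\Mod^+(\Sigma)$, hence in $\Mod(\Sigma)$ by index two.
\end{itemize}

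Your first sketch, with non-simple $\alpha_j$, is not justified as written. If $f$ fixes the classes $\alpha_j$, then $f(\Gamma)$ is the union of geodesic representatives for the metric $f_*X$. For non-simple curves, or for three pairwise intersecting simple curves, such unions for two different hyperbolic metrics are in general related only by isotopy together with triangle (Reidemeister~III) moves. So ``a mapping class fixing all the $\alpha_j$ can be isotoped to preserve this graph'' is exactly the step that fails there. The $\pi_1$ remark does not repair it, since the cyclic order of axis endpoints at infinity does not determine the orientation of such triangles.
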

The above result is also recorded explicitly as Property (9) for geodesic currents in a paper by Erlandsson and Souto \cite{ErlandssonSoutoSimpleToAll}. The same local statement can also be deduced from Burger-Iozzi-Parreau-Pozzetti \cite{BIPP} using their proper-discontinuity result on the positive-systole locus. Finally, note that the statements of the above results are stated for $\Mod^+(\Sigma)$, the index $2$ subgroup of orientation preserving self-homeomorphisms, but the properness immediately extends to the full mapping class group. Note that this also implies proper discontinuity on the subset $\Curr_X^1\subset \Curr(\Sigma)$.

We have a final needed ingredient, which bounds the number of curves left invariant by isometries. We set $G_X:=\Isom(X)$ to be the full group of isometries of $X$. 

\begin{lemma}\label{lem:isom}
The number of closed geodesics of length at most $L$ invariant by a non-trivial element of $G_X$ grows at most like $O(e^{L/2})$.
\end{lemma}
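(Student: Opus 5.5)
Since $G_X$ is finite (Hurwitz), it suffices to fix a non-trivial $h\in G_X$ and show that the number of primitive closed geodesics of length at most $L$ with $h(\gamma)=\gamma$ is $O(e^{L/2})$; summing over the finitely many $h$ then gives the lemma. The plan is to pass to the quotient orbifold $Y=X/\langle h\rangle$. Write $n\geq 2$ for the order of $h$. The cyclic group $\langle h\rangle$ acts by isometries, so $\pi:X\to Y$ is a branched covering of degree $n$ onto a closed hyperbolic orbifold. Let $\gamma$ be an $h$-invariant closed geodesic of length $\ell\leq L$. Then $h$ restricts to an isometry of $\gamma$, which is either a rotation or a reflection of the circle.

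\emph{Rotation case.} Here $h|_\gamma$ has some order $k\geq 2$ dividing $n$, or $k=1$ if $h$ fixes $\gamma$ pointwise. When $k=1$, $\gamma$ lies in the fixed set of $h$, which is a finite union of geodesics and points, so only finitely many such $\gamma$ occur. When $k\geq 2$, $\pi(\gamma)$ is a closed geodesic on $Y$ of length $\ell/k\leq L/2$, and $\gamma$ is one of at most $n$ lifts of it. Distinct $\gamma$ give distinct pairs $(\pi(\gamma),\text{lift})$.

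\emph{Reflection case.} Here $h|_\gamma$ has two fixed points on $\gamma$, which are fixed points of $h$ in $X$. The image $\pi(\gamma)$ is then a geodesic arc of length $\ell/2$ in $Y$ joining two cone points, or reflecting along a mirror if $h$ reverses orientation. Equivalently, $\gamma$ is determined by a geodesic segment of length $\ell/2$ in $X$ between two points of the finite set $\mathrm{Fix}(h)$, or, if $h$ reverses orientation, meeting the fixed geodesics orthogonally.

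Both cases reduce to counting closed geodesics, or geodesic arcs between fixed finite configurations (points, or orthogeodesics between finitely many closed geodesics), of length at most $L/2$. For the rotation case we need that a closed hyperbolic orbifold has at most $Ce^{L/2}$ closed geodesics of length $\leq L/2$. This follows from the standard orbit-counting bound $\#\{g\in\Gamma: d(x,gx)\leq R\}=O(e^{R})$ in $\mathbb H^2$, applied to the orbifold group. For the reflection case, the number of geodesic arcs of length $\leq R$ between two given points of a closed hyperbolic surface is again $O(e^{R})$ by the same lattice-point count. The number of orthogeodesics between two fixed closed geodesics of length $\leq R$ is $O(e^{R})$ by comparing with a volume of a neighbourhood of the lifts. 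With $R=L/2$, each case contributes $O(e^{L/2})$, multiplied by the finitely many choices of $k$, lift, and pairs of fixed-set components.

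The main obstacle is organizing the reflection case uniformly, so that each invariant $\gamma$ is recovered from a bounded amount of data together with a half-length arc. The correspondence also has to be at most finite-to-one. I would first handle orientation-preserving $h$, where the fixed set is finite and the argument reduces to arcs between cone points. I would then treat orientation-reversing $h$ separately by looking at its fixed geodesics, for which the orthogeodesic count gives the same $O(e^{L/2})$ bound.
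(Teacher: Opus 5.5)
Your proposal is correct and follows essentially the same route as the paper. Like the paper, you pass to the quotient by $\langle h\rangle$, split into the pointwise-fixed, rotation and reflection cases, and reduce to counting closed geodesics, arcs between cone points, or orthogeodesics of length at most $L/2$, each of which is $O(e^{L/2})$ by orbit counting. The only differences are cosmetic: you handle the pointwise-fixed case by finiteness of $\mathrm{Fix}(h)$ rather than Harnack's theorem, and in the reflection case you count the half-length arcs directly in $X$ rather than in $X/\langle h\rangle$, which spares you the paper's observation that $h$ must then be an involution.
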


\begin{proof}
Fix $1\neq h\in G_X$ and suppose that a primitive closed geodesic $\gamma$ is preserved setwise by $h$.

If $h$ fixes $\gamma$ pointwise, then $h$ must be orientation-reversing since an orientation-preserving isometry of $X$ which fixes a geodesic pointwise is the identity. And a non-trivial orientation-reversing isometry which fixes a curve pointwise is an involution, and $\gamma$ is a subset of its fixed points. There are at most $g+1$ such curves where $g$ is the genus of $X$, so in particular at most $O(1)$ (this is referred to as Harnack's theorem). 

We can thus suppose that the restriction of $h$ to $\gamma$ is nontrivial. 

If $h$ preserves the orientation of $\gamma$, then it acts on $\gamma$ by a
nontrivial rotation of some order $k\geq 2$. Thus $\gamma$ projects to a
closed geodesic in the compact hyperbolic orbifold $ X/\langle h\rangle$ of length
\[
 \frac{\ell_X(\gamma)}{k}\leq \frac{\ell_X(\gamma)}2.
\]
If $h$ reverses the orientation of $\gamma$, then its restriction to $\gamma$ has order two because it is an orientation reversing isometry of a circle. And because $h^2$ is an orientation preserving isometry of $X$ that fixes $\gamma$ pointwise, it is equal to the identity, and so $h$ is also a global involution of $X$. 

Now if $h$ itself is orientation-preserving, then $\gamma$ projects on $X/\langle h\rangle$ to a geodesic arc of length $\ell_X(\gamma)/2$ between orbifold points. And if $h$ is orientation-reversing, then $X/\langle h\rangle$ is a surface with geodesic boundary, and $\gamma$ projects to an orthogeodesic of length $\ell_X(\gamma)/2$. 

Consequently every $h$-invariant primitive geodesic of length at most $L$ gives, up to a multiplicity bounded in terms of $h$, either a closed geodesic, a geodesic arc between two orbifold points in the fixed orbifold $X/\langle h\rangle$, or an orthogeodesic on $X/\langle h\rangle$, all of length at most $L/2$. 

The crucial observation is now the following: let $Y$ be an orbifold surface, possibly with boundary geodesics. For any $R>0$, there is a $O(e^R)$ bound on all of the following quantities:
\begin{enumerate}
\item the number of closed geodesics of $Y$ of length at most $R$,
\item the number of geodesic arcs of length at most $R$ between orbifold points,
\item the number of orthogeodesics of length at most $R$ between boundary geodesics.
\end{enumerate}
For closed geodesics this follows from the prime geodesic theorem or, more concretely, from a result of Buser's \cite[Lemma 6.6.4]{BuserBook}. The proof of the latter result is an orbit growth technique in the universal cover and the same proof shows that the number of geodesic arcs between orbifold points and the number of orthogeodesics are both bounded above by $O(e^R)$. 

Therefore, for any nontrivial $h\in \Isom(X)$, we have
\[
 \#\{\gamma\in\mathcal G_X(L):h(\gamma)=\gamma\}
 =O(e^{L/2}).
\]
Since $G_X$ is finite, summing over $1\neq h\in G_X$ proves the lemma.
\end{proof}

\subsection{The main proof}
Since $L_X$ is filling, we may choose a relatively compact neighborhood $W$ of $\overline L_X$ in $\Curr_X^1$ whose closure consists of filling currents. Shrinking $W$ if necessary, we may moreover suppose that
\[
 \frac{1}{2}\, i(\overline L_X,\overline L_X)
 <
 i(\mu,\mu)
 <
 2\,i(\overline L_X,\overline L_X)
\]
for every $\mu\in W$.

Suppose that $\mu,\nu\in W$ and that $\phi\cdot\mu=\nu$ for some $\phi\in\Mod(\Sigma)$. By definition of the induced action on $\Curr_X^1$, there is some $t>0$ such that $\phi_*\mu=t\nu$.

Since the intersection form is mapping-class-group invariant,
\[
 i(\mu,\mu)=i(\phi_*\mu,\phi_*\mu)=t^2i(\nu,\nu),
\]
and hence $t$ is bounded above and below by positive constants independent of $\phi$. It follows that both $\mu$ and $\phi_*\mu$ lie in a fixed compact subset of $\Curr^f(\Sigma)$. Proper discontinuity therefore implies that only finitely many mapping classes $\phi$ can satisfy
\[
( \phi\cdot W) \cap W\neq\varnothing.
\]
The stabilizer of $\overline L_X$ for this action is precisely $G_X=\Isom(X)$. We may therefore shrink $W$ to a $G_X$-invariant neighborhood $U$ of $\overline L_X$ such that
\begin{equation}\label{eq:localproper}
( \phi\cdot U)\cap U\neq\varnothing
 \quad\Longrightarrow\quad
 \phi\in G_X.
\end{equation}
and set
\[
 A_L:=\left\{\gamma\in\mathcal G_X(L):
 \frac{\gamma}{\ell_X(\gamma)}\in U\right\}.
\]
By Lemma \ref{lem:growth}, $|A_L|\sim e^L/(2L)$. If $\gamma,\delta\in A_L$ have the same topological type, then $\delta=\phi(\gamma)$ for some $\phi\in\Mod(\Sigma)$. By the definition of the induced action on $\Curr_X^1$,
\[
 \phi\cdot
 \frac{\gamma}{\ell_X(\gamma)}
 =
 \frac{\delta}{\ell_X(\delta)}.
\]
Hence $(\phi\cdot U)\cap U\neq\varnothing$, and \eqref{eq:localproper} implies that $\phi\in G_X$.

Furthermore, $U$ is $G_X$-invariant. Thus the topological types represented in $A_L$ are exactly the $G_X$-orbits in $A_L$. Now Lemma \ref{lem:isom} says that all but $O(e^{L/2})$ of the relevant geodesics belong to free $G_X$-orbits. Equivalently, Burnside's lemma and Lemma \ref{lem:isom} give
\[
 \#(A_L/G_X)
 =\frac{1}{|G_X|}\sum_{h\in G_X}|A_L^h|
 =\frac{|A_L|}{|G_X|}+O(e^{L/2})
 \sim \frac{1}{|G_X|}\,\frac{e^L}{2L}
\]
where $A_L^h$ denotes the subset of $A_L$ preserved by $h$.

Finally, Huber's theorem gives $|\mathcal G_X(L)|\sim e^L/(2L)$, so Lemma \ref{lem:growth} implies
\[
 |\mathcal G_X(L)\setminus A_L|
 =o\!\left(\frac{e^L}{2L}\right).
\]
Every topological type counted by $\pitop(L)$ but not represented in $A_L$ contains a geodesic in this exceptional set. Hence the number of such types is also $o(e^L/(2L))$, and  thus 
\[
 \#(A_L/G_X) \sim \pitop(L) 
\]
and Theorem~\ref{thm:types} follows.

\subsection{Final comments}
We end with a few remarks.

\begin{remark}\label{rem:orient}
If we defined topological types of curves as being up to orientation-preserving homeomorphism (or, equivalently the set of $\Mod^+(\Sigma)$ orbits), then the result becomes 
\[
 \pitopplus(L) \sim 
 \frac{1}{|\Isom^+(X)|}\,\frac{e^L}{2L}.
 \]
\end{remark}

\begin{remark}\label{rem:frequencies}
There is a useful heuristic argument which explains why the $ \approx e^L/L$ order of growth is what you might expect. This follows the work of Rafi and Souto \cite{RafiSouto}.

The dimension of the space of measured laminations on $\Sigma$ is 
\[
 d=\dim\ML(\Sigma)=6g-6
\]
where $g$ is the genus of $\Sigma$. For a filling current $\mu$, the quantity \[
 m(\mu)
 =\Th\{\lambda\in\ML(\Sigma):i(\mu,\lambda)\le1\}
\]
is the Thurston measure of the unit ball about $\mu$. Rafi-Souto identify this, up to the standard fixed normalization and finite stabilizer convention, with the type-dependent factor in the mapping-class-orbit counting asymptotic (the so-called frequency). Now since Thurston measure is homogeneous of degree $d$:
\[
 m(t\mu)=t^{-d}m(\mu).
\]
Hence, for a long filling geodesic $\gamma$, we have 
\[
m(\gamma)
 =\ell_X(\gamma)^{-d}
 m\!\left(\frac{\gamma}{\ell_X(\gamma)}\right).
\]
And, for a generic geodesic, by the work of Aougab-Souto \cite{AougabSouto}, the normalized version of the current is very close to $\overline L_X$. In particular, a generic geodesic of length about $L$ should have frequency of order $L^{-d}.$

This is exactly the scale one should expect. Mirzakhani's orbit-counting formula \cite{MirzakhaniOrbits} says that the number of mapping class orbits of $\gamma$ of length less than $L$ is asymptotic to 
\[
A_X\,c(\gamma)L^d.
\]
So a typical curve of length close to $L$ has $c(\gamma)\asymp L^{-d}$, then its topological type contributes only $\asymp 1$. The main theorem makes this rough growth estimate precise: generically the only multiplicity that survives is the finite number coming from isometries.
\end{remark}

{\it Addresses:}\\
The Graduate Center and Hunter College, CUNY, NY, New York, USA.\\
Department of Mathematics, University of Fribourg, Switzerland\\

{\it Emails:}\\
abasmajian@gc.cuny.edu\\
hugo.parlier@unifr.ch\\

\end{document}